\numberwithin{equation}{section}
\newtheorem{thm}{Theorem}[section]
\newtheorem{lem}[thm]{Lemma}
\begin{document}
\title{An Elementary Proof of the Hook Content Formula}
\author{Graham Hawkes}

\begin{abstract}
In this paper we prove the Hook Content Formula (HCF) (and Hook Length Formula (HLF)) using induction. Instead of working with Young tableaux directly, we introduce a vector notation (sequences of these vectors represent (''single-vote") ballot sequences in the case of SYT, and ``multi-vote" ballot sequences--where the voter may choose any number of candidates--in the case of SSYT) \cite{mc} to aid in the inductive argument.
Next, we establish an identity (equation \eqref{alg}) which allows us to prove a formula that counts multi-vote ballot sequences. We demonstrate that, in the non-degenerate case (when this formula counts SSYTs) it coincides with the HCF. To do the latter, we borrow parts of a technique outlined by Wilson and Van Lint in their proof of the HLF \cite{van}. We then establish an identity, which is really a special case of equation \eqref{alg}, and show that the HLF follows from it. (Wilson and Van Lint prove this identity directly, and use it to prove the HLF in a similar manner.) We then note the appearance of an expression resembling the Weyl dimension formula \cite{weyl} and conclude with a combinatorial result.
\end{abstract}
\maketitle
\section{Introduction}
As far as proofs of the Hook Content Formula (HCF) it seems simplicity often takes a back seat to bijectivity. But a result so easily formulated should have a proof which is not overly complicated. More importantly, perhaps, a result so accessible should have a proof that is also accessible. The prerequisites for understanding the result of the HCF include such skills as counting squares and multiplying numbers. Of course, it is likely impossible to give a proof which uses only such tools, but our goal here is to come as close as possible. Namely, to present a proof that is simple, self-contained, and most importantly, accessible. 

The basic inductive approach is to first switch from thinking about semistandard Young tableaux to thinking about ``multi-vote" ballot sequences--on such ballots a voter may vote for as many candidates as he likes (including none). The problem is this: Suppose we have a group of candidates such that no two candidates have the same height. Further, suppose we know how many votes each candidate will receive as well as the total number of ballots that will be cast. Then, how many ways can said ballots be cast such that, at any point in the election, a taller candidate always has more (or the same) votes than a shorter one?

To answer this, we will look at the possible ways the last voter may have filled out his ballot, and investigate how each one of these hypothetical final ballots influences the possible ways the election could have proceeded up to that point. This will lead us to an obvious recursion that our answer must verify. We then present a formula which we claim \textit{will} satisfy this recursion. The rest, as the saying goes, is algebra.
\section{The Set-Up}

Suppose $d\geq 0$, and we are given $N\geq 0$, and a list of $(d+1)$ nonnegative integers, $n_0,\ldots, n_d$ (Indexing the $n_i$ starting on $0$ will make a number of computations neater). Let $v_1,\ldots,v_N$ be a list of $N$, $(d+1)$--dimensional vectors, each of which is composed of $1$'s and $0$'s and such that:
\begin{eqnarray*}
\sum_{k=1}^N v_k=\begin{bmatrix} n_0 \\ \vdots \\n_d \end{bmatrix}.\\
\end{eqnarray*}
Define $C(N,n_0,\ldots,n_d)$ to be the number of choices of such a list such that for each integer $L \in [1,N]$, the vector
$\sum\limits_{k=1}^L v_k$ is weakly decreasing from top to bottom. 

If there exists $i \in \{1,\ldots, d\}$ such that $n_{i-1} < n_i$, then $C(N,n_0,\ldots,n_d)=0$. If not, it follows that $n_0\geq \cdots\geq n_d$.
If, in addition, $n_d\neq 0$, then:
\begin{eqnarray*}
C(N,n_0,\ldots,n_d)= SSYT(N,(n_0,\ldots,n_d)'),
\end{eqnarray*}
where $SSYT(N,(n_0,\ldots,n_d)')$ refers to the number of semi-standard Young tableaux on a Young diagram of shape $(n_0,\ldots,n_d)'$ using integers from the set $\{1,\ldots,N\}$ (where the prime notation refers to partition conjugation).

In this case (i.e., $n_0\geq \cdots\geq n_d$ and $n_d\neq 0$) each $n_i \geq 1$, so, if $N\geq 1$ as well, we may write:
\begin{eqnarray*}
C(N,n_0,\ldots,n_d)=\sum_{(j_0,\ldots,j_d): j_i\in \{0,1\}} C(N-1,n_0-j_0,\ldots,n_d-j_d),
\end{eqnarray*}
where one should think of each possible vector of the form $\begin{bmatrix} j_0 \\ \vdots \\j_d \end{bmatrix}$ as a possible value for the final vector $v_N$, in the list $v_1,\ldots,v_N$.

\section{Counting $C(N,n_0,\ldots,n_d)$}
Let $d\geq 0$ and let $\vec{N}=(N,n_0,\ldots,n_d)$ be a $(d+2)$--tuple of nonnegative integers. If, for all $i \in \{1,\ldots, d\}$, we have $n_{i-1}+1\geq n_i$, define
\begin{eqnarray*}
\dagger(\vec{N})=1.
\end{eqnarray*}
Otherwise, define
\begin{eqnarray*}
\dagger(\vec{N})=0.
\end{eqnarray*}
Moreover, let:
\begin{eqnarray*}
F(N,n_0,\ldots,n_d)=\prod_{i=0}^d\frac{(N+i)!}{(N+i-n_i)!}\times \frac{V(m_0,\ldots,m_d)}{m_0!\cdots m_d!},
\end{eqnarray*}
where $V$ refers to the Vandermonde polynomial, and $m_i=n_i+d-i$ for each $i$. We use the conventions that $0!=1$, and for $n>0$, 
\begin{eqnarray*}
\frac{1}{(-n)!}= \lim_{k \rightarrow \infty} \prod_{m=n}^k \bigg(\frac{1}{-m}\bigg)=0.
\end{eqnarray*}
\begin{thm}\label{main}
Let $d\geq 0$ and let $\vec{N}=(N,n_0,\ldots,n_d)$ be a $(d+2)$--tuple of nonnegative integers. We have:
\begin{eqnarray*}
\dagger(\vec{N})=0 &\implies& C(\vec{N})=0\\
\dagger(\vec{N})=1 &\implies& C(\vec{N})=F(\vec{N}).
\end{eqnarray*}
\end{thm}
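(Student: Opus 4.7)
The plan is to proceed by induction on $N$, reducing the theorem to a single algebraic identity which I would then prove by a multilinearity-plus-row-operation argument on a determinant. The case $\dagger(\vec N) = 0$ is immediate: by definition some $n_{i-1} + 1 < n_i$, hence $n_{i-1} < n_i$, so the set-up already gives $C(\vec N) = 0$. In the base case $N = 0$ of the $\dagger = 1$ branch, a direct computation handles $\vec n = \vec 0$ (both sides equal $1$), while any nonzero $\vec n$ forces either two of the $m_i$'s to coincide (annihilating $V(m)$) or $n_0 > 0$ (annihilating a factorial reciprocal via the $1/(-n)! = 0$ convention), in either case matching $C = 0$.

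For the inductive step with $\dagger(\vec N) = 1$, if $\vec n$ is not weakly decreasing then $\dagger = 1$ forces $n_{i-1} + 1 = n_i$ for some $i$, whence $m_{i-1} = m_i$ and both $F$ and $C$ vanish. Otherwise $\vec n$ is weakly decreasing and I apply the recursion from the set-up,
\begin{equation*}
C(N, \vec n) = \sum_{\vec j \in \{0,1\}^{d+1}} C(N-1, \vec n - \vec j).
\end{equation*}
The inductive hypothesis replaces each $C(N-1, \cdot)$ by $F(N-1, \cdot)$, with the contributions where $\vec n - \vec j$ has a negative coordinate, or where $\dagger$ of the shift fails, vanishing on both sides (on the $F$ side via the same factorial and Vandermonde conventions). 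The theorem therefore reduces to the identity
\begin{equation*}
F(N, \vec n) = \sum_{\vec j \in \{0,1\}^{d+1}} F(N-1, \vec n - \vec j),
\end{equation*}
and after cancelling common prefactors this becomes
\begin{equation*}
\prod_{i=0}^d (N+i) \cdot V(m) = \sum_{\vec j} V(m - \vec j) \prod_{i=0}^d \beta_i(j_i),
\end{equation*}
with $m_i = n_i + d - i$, $M = N+d$, $\beta_i(0) = M - m_i$, and $\beta_i(1) = m_i$.

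The main obstacle is this last identity, and my plan is a determinantal collapse. Writing the Vandermonde as $V(y) = \det[y_k^i]_{i,k=0}^d$ (up to a global sign), the $k$-th column of $V(m - \vec j)$ depends only on $j_k$, so multilinearity of the determinant in its columns absorbs the whole sum over $\vec j$ into a single determinant $\det[q_i(m_k)]$, where $q_i(m) = (M - m) m^i + m(m-1)^i$. A short expansion gives $q_i(m) = (M - i) m^i + (\text{polynomial in } m \text{ of degree} < i)$, so $[q_i(m_k)] = A \cdot [m_k^\ell]$ for a lower-triangular matrix $A$ with diagonal entries $M, M-1, \ldots, M-d$. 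Multiplicativity of the determinant then yields $\det[q_i(m_k)] = \prod_{i=0}^d (M - i) \cdot V(m) = \prod_{i=0}^d (N+i) \cdot V(m)$, which is exactly the identity. The remaining work is the bookkeeping of the base case and the careful checking that degenerate contributions in the recursion really vanish on both sides.
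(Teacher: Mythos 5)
Your proposal is correct, and its inductive skeleton (base case $N=0$, the $\dagger=0$ dichotomy, the recursion over last ballots $\vec j\in\{0,1\}^{d+1}$, and the reduction to the identity $F(N,\vec n)=\sum_{\vec j}F(N-1,\vec n-\vec j)$) is the same as the paper's. Where you genuinely diverge is in the proof of that identity. The paper proves a more general statement (Lemma \ref{prelem}, with a free parameter $t$) by showing the sum $G(\vec X,t)$ is antisymmetric and homogeneous of degree $\tfrac{n(n+1)}{2}$ in $(x_0,\dots,x_n)$, hence a multiple of $V(x_0,\dots,x_n)$, and then pinning down the quotient $H(X,t)$ by extracting the coefficient of $x_0^n\cdots x_n^0$. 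Your route writes $V$ as a determinant, observes that the $k$-th column of $V(m-\vec j)$ and the weight $\beta_k(j_k)$ depend only on $j_k$, collapses the $2^{d+1}$-term sum into the single determinant $\det[q_i(m_k)]$ with $q_i(m)=(M-m)m^i+m(m-1)^i=(M-i)m^i+(\text{lower order})$, and factors out a lower-triangular matrix with diagonal $M,M-1,\dots,M-d$. This is shorter and avoids the divisibility and degree arguments entirely; it also generalizes verbatim to the paper's parameter $t$ (replace $(m-1)^i$ by $(m-t)^i$ to get leading coefficient $M-it$ and hence $\prod_r(X-rt)$), so nothing of Lemma \ref{prelem} is lost. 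One point to make explicit in a write-up: you apply the recursion without the paper's WLOG reduction to $n_d\neq 0$, so you must verify that the terms with some $n_i-j_i=-1$ really do vanish on the $F$ side; they do (if the offending index is $i=d$ one gets a factor $1/(-1)!=0$, and otherwise $m_i-j_i=m_{i+1}-j_{i+1}=d-i-1$ kills the Vandermonde), but this is a small check the paper sidesteps by stripping terminal zeros, and your sketch currently asserts it rather than proves it.
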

\begin{proof}
We prove the theorem by induction on $N$. Let $N=0$. First, suppose that $\dagger(\vec{N})=0$. It follows that some $n_i \neq 0$, whence, $C(\vec{N})=0$. On the other hand, suppose $\dagger(\vec{N})=1$. If each $n_i=0$, then $C(\vec{N})=1=F(\vec{N})$. Otherwise some $n_i>0$, and $C(\vec{N})=0$. Moreover, in this case, either $n_0>0$ so that the left hand factor of $F(\vec{N})$ vanishes, or, for some $i \in \{1,\ldots, d\}$, we have $n_{i-1}< n_i$ (i.e., $n_{i-1}+1=n_i$), so that the right hand factor of $F(\vec{N})$ vanishes. Regardless, $F(\vec{N})=0$. Thus, Theorem \ref{main} holds for $N=0$. 

Now suppose $N\geq 1$ and that \ref{main} holds for $N-1$. If $\dagger(\vec{N})=0$ it is clear that $C(\vec{N})=0$ as claimed, as $\begin{bmatrix} n_0 \\ \vdots \\n_d \end{bmatrix}$ itself is not weakly decreasing from top to bottom.

Therefore, assume $\dagger(\vec{N})=1$. First, suppose that for some $i \in \{1,\ldots, d\}$, we have $n_{i-1}< n_i$ (i.e., $n_{i-1}+1=n_i$). Clearly $C(\vec{N})=0$, and $F(\vec{N})=0$ as well, because the right hand factor of $F(\vec{N})$ vanishes. 
Hence, we may suppose that for each $i \in \{1,\ldots, d\}$, we have $n_{i-1}\geq n_i$. Since both $C(\vec{N})$ and $F(\vec{N})$ are invariant under the addition or removal of terminal 0's from $(n_0,\ldots,n_d)$, we may assume WLOG that $n_d\neq 0$. This implies that each $n_i \geq 1$, which, in conjunction with the fact that $N\geq 1$, allows us to write:

\begin{eqnarray*}
C(N,n_0,\ldots,n_d)&=&\sum_{(j_0,\ldots,j_d): j_i \in \{0,1\}} C(N-1,n_0-j_0,\ldots,n_d-j_d)\\
&=&\sum_{(j_0,\ldots,j_d): j_i \in \{0,1\}} F(N-1,n_0-j_0,\ldots,n_d-j_d),
\end{eqnarray*}
where the last equality follows by the inductive hypothesis and the fact that, for each $(j_0,\ldots,j_d)$, we have $\dagger(N-1,n_0-j_0,\ldots,n_d-j_d)=1$. Hence if we can establish the identity:
\begin{eqnarray}
F(N,n_0,\ldots,n_d)=\sum_{(j_0,\ldots,j_d): j_i \in \{0,1\}} F(N-1,n_0-j_0,\ldots,n_d-j_d). \label{Feq}
\end{eqnarray}
we are done.
\end{proof}

\section{A Useful Algebraic Result}
In order to establish \eqref{Feq} we will need the following lemma: 
\begin{lem}\label{prelem}
Write $\vec{X}=(X,x_0,\ldots,x_n)$ and define:
\begin{eqnarray*}
G(\vec{X},t)&=&\sum_{(j_0,\ldots,j_n): j_i \in \{0,1\}}\Bigg(\bigg[ \prod_{i=0}^n (X-x_i)^{1-j_i}x_i^{j_i}\bigg] V(x_0-j_0t,\ldots,x_n-j_nt)\Bigg).
\end{eqnarray*}
We have:
\begin{equation} 
G(\vec{X},t)=\Bigg[\prod_{r=0}^n (X-rt)\Bigg]V(x_0,\ldots,x_n)\label{alg},
\end{equation}
where $V$ refers to the Vandermonde polynomial.
\end{lem}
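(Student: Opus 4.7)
The plan is to treat \eqref{alg} as a polynomial identity in the indeterminates $X, t, x_0, \ldots, x_n$ and verify it in three steps: (i) factor $V(x_0, \ldots, x_n)$ out of $G$ by exhibiting antisymmetry in the $x_i$'s; (ii) show that the quotient $G/V$ depends only on $X$ and $t$ by a degree bound on $G$; and (iii) pin down the quotient by a single specialization of $\vec x$.

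For (i), I would interchange $x_p$ and $x_q$ in the definition of $G$ and, within the same sum, relabel the dummy indices by swapping $j_p \leftrightarrow j_q$. This combined substitution leaves the scalar factor $\prod_i (X - x_i)^{1-j_i} x_i^{j_i}$ unchanged, whereas in the Vandermonde $V(x_0 - j_0 t, \ldots, x_n - j_n t)$ the entries at positions $p$ and $q$ are exchanged, producing an overall sign $-1$. Thus every transposition of the $x_i$'s sends $G$ to $-G$, so $V(x_0, \ldots, x_n)$ divides $G$ in the polynomial ring and we may write $G = V \cdot H$ for some symmetric polynomial $H$.

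For (ii), I would compute the coefficient of $x_0^{n+1}$ in $G$. Writing $\tilde x_i := x_i - j_i t$, each summand has $x_0$-degree exactly $n+1$: degree $1$ from the scalar factor $(X - x_0)^{1-j_0} x_0^{j_0}$, and degree $n$ from the sub-product $\prod_{b=1}^n(\tilde x_b - \tilde x_0)$ of $V(\tilde x)$. The leading $x_0$-coefficient of the scalar factor equals $-1$ when $j_0 = 0$ and $+1$ when $j_0 = 1$, while that of $V(\tilde x)$ equals $(-1)^n V(x_1 - j_1 t, \ldots, x_n - j_n t)$, independent of $j_0$; hence for each fixed $(j_1, \ldots, j_n)$ the $j_0 = 0$ and $j_0 = 1$ summands contribute opposite amounts and the coefficient cancels in the sum. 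By antisymmetry the same holds in every $x_i$, so $G$ has $x_i$-degree $\leq n$ for each $i$; since $V$ has degree exactly $n$ in each $x_i$, the factor $H$ must be a polynomial in $X$ and $t$ alone.

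For (iii), I would specialize $x_i = it$. For any subset $S \subseteq \{0, \ldots, n\}$ that is not an initial segment there is an index $i \in S$ with $i-1 \notin S$, forcing $\tilde x_i = \tilde x_{i-1}$ and hence $V(\tilde x) = 0$; for any nonempty initial segment $S = \{0, \ldots, k-1\}$ the factor $\prod_{i \in S} x_i$ contains $x_0 = 0$; so only the term $S = \varnothing$ contributes, yielding $G|_{x_i = it} = \prod_{i=0}^n (X - it) \cdot V(0, t, \ldots, nt)$. Comparing this with $G = V \cdot H$ and dividing through by the nonzero polynomial $V(0, t, \ldots, nt)$ forces $H = \prod_{r=0}^n (X - rt)$, which is exactly \eqref{alg}. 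I expect the leading-coefficient cancellation in step (ii) to be the main obstacle; once the framework is in place the rest reduces to bookkeeping.
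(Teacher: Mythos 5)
Your proof is correct. Steps (i) and (ii) are essentially the paper's argument: antisymmetry by pairing the summand for $(j_p,j_q)$ with the one for $(j_q,j_p)$ under the transposition $x_p\leftrightarrow x_q$, and a degree bound obtained from the cancellation of the $x_0^{n+1}$-coefficients of the $j_0=0$ and $j_0=1$ summands (the paper packages this as homogeneity of total degree $\tfrac{n(n+1)}{2}$, using antisymmetry for the lower bound, but the content is the same and both give $G=V\cdot H$ with $H\in\mathbb{Z}[X,t]$). Where you genuinely diverge is step (iii). The paper determines $H$ by extracting the coefficient of the staircase monomial $x_0^n x_1^{n-1}\cdots x_n^0$: it expands the Vandermonde over permutations, notes that only the identity permutation survives because the scalar factor is linear in each $x_i$, and then matches the resulting sum $\sum_m a_m X^{n+1-m}t^m$ against the elementary-symmetric expansion of $\prod_r(X-rt)$. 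Your specialization $x_i=it$ replaces all of that with one evaluation: any summand with $S=\{i: j_i=1\}\neq\varnothing$ vanishes, either because a gap in $S$ forces two equal arguments in the Vandermonde or, for a nonempty initial segment, because the scalar factor contains $x_0=0$; what remains is $\prod_{r=0}^n(X-rt)\cdot V(0,t,\ldots,nt)$, and cancelling the nonzero polynomial $V(0,t,\ldots,nt)$ in the domain $\mathbb{Z}[X,t]$ pins down $H$ immediately. This is shorter and dispenses with the paper's bookkeeping with the operators $\delta$, $\delta_i$ and the coefficient comparison $a_m=b_m$; the paper's computation, in exchange, displays the coefficient structure of $H$ explicitly, which is in the same spirit as the degree-$n$-in-$X$ extraction it later performs on \eqref{alg} to reach the Hook Length Formula. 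Either route is complete; yours is arguably the more elementary.
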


\begin{proof}
Before we start, we let $\vec{J}=(j_0,\ldots,j_n)$ and write: 
\begin{eqnarray*}
\psi(\vec{X},t,\vec{J})=\bigg[ \prod_{i=0}^n (X-x_i)^{1-j_i}x_i^{j_i}\bigg] V(x_0-j_0t,\ldots,x_n-j_nt),
\end{eqnarray*}
so that we have:
\begin{eqnarray*}
G(\vec{X},t)=\sum_{\vec{J}\in\{0,1\}^{n+1}}\psi(\vec{X},t,\vec{J}).
\end{eqnarray*}
\subsection{}

First, we show that $G(\vec{X},t)$ is antisymmetric with respect to transposition of any two of the variables $(x_0,\ldots,x_n)$. Indeed, fix $k$ and $l$ such that $0\leq k < l \leq n$ and let $\vec{J_{kl}}=(j_0,\ldots,j_{k-1},j_{k+1},\ldots,j_{l-1},j_{l+1},\ldots, j_n) \in \{0,1\}^{n-1}$ denote the values of $j_i$ in $\vec{J}$ for $i \neq k,l$. Then,

\begin{eqnarray*}
G(\vec{X},t)=\sum_{\vec{J}_{kl} \in \{0,1\}^{n-1}} \Bigg(\sum_{(j_k, j_l) \in \{0,1\}\times\{0,1\}} \psi(\vec{X},t,\vec{J}_{kl},j_k,j_l)\Bigg).
\end{eqnarray*}

$G(\vec{X},t)$ is antisymmetric because the expression inside the large parenthesis above is always antisymmetric. To see the latter, let $\vec{X}'$ be the vector obtained from $\vec{X}$ by switching $x_k$ and $x_l$. Then, for any fixed value of $\vec{J}_{kl}\in \{0,1\}^{n-1}$,
\begin{eqnarray*}
\psi(\vec{X},t,\vec{J}_{kl},0,0)&=&-\psi(\vec{X}',t,\vec{J}_{kl},0,0)\\
\psi(\vec{X},t,\vec{J}_{kl},0,1)&=&-\psi(\vec{X}',t,\vec{J}_{kl},1,0)\\
\psi(\vec{X},t,\vec{J}_{kl},1,0)&=&-\psi(\vec{X}',t,\vec{J}_{kl},0,1)\\
\psi(\vec{X},t,\vec{J}_{kl},1,1)&=&-\psi(\vec{X}',t,\vec{J}_{kl},1,1).
\end{eqnarray*}

\subsection{}

Now we show that $G(\vec{X},t)$ is homogenous of degree $\frac{n(n+1)}{2}$ with respect to the variables $(x_0,\ldots, x_n)$. Clearly any monomial in its monomial expansion must have degree at least $\frac{n(n+1)}{2}$ by antisymmetry. Suppose, therefore, that some monomial, $m$, in this expansion has degree larger than $\frac{n(n+1)}{2}$ in $(x_0,\ldots, x_n)$. It follows that $m$ has degree greater than $n$ (but no greater than $n+1$) in some variable $x_i$. We may assume, WLOG, this variable is $x_0$, that is, that $m$ has degree $n+1$ in $x_0$.

Let $\vec{J}_0=(j_1,\ldots,j_n) \in \{0,1\}^n$ denote the values of $j_i$ in $\vec{J}$ for $i\neq 0$, so that:
\begin{eqnarray}
G(\vec{X},t)=\sum_{\vec{J}_0 \in \{0,1\}^n} \Bigg(\sum_{j_0 \in \{0,1\}} \psi(\vec{X},t,\vec{J}_0,j_0)\Bigg). \label{hom}
\end{eqnarray}
$G(\vec{X},t)$ has no monomials of degree $n+1$ in $x_0$ because the expression inside the parenthesis above never has any. Indeed, fix $\vec{J}_0=(j_1,\ldots,j_n)$.
Then the entire degree $n+1$ (with respect to $x_0$) part of $\psi(\vec{X},t,\vec{J}_0,0)$ is given by:
\begin{eqnarray*}
-x_0\Bigg[\prod_{i=1}^n (X-x_i)^{1-j_i}x_i^{j_i}\Bigg]x_0^n \Bigg[\sum_{\sigma \in S_n}(-1)^{sgn(\sigma)}\Bigg(\prod_{i=1}^n (x_i-j_it)^{n-\sigma(i)}\Bigg)\Bigg],
\end{eqnarray*}
whereas the entire degree $n+1$ (with respect to $x_0$) part of $\psi(\vec{X},t,\vec{J}_0,1)$ is given by:
\begin{eqnarray*}
x_0\Bigg[\prod_{i=1}^n (X-x_i)^{1-j_i}x_i^{j_i}\Bigg]x_0^n \Bigg[\sum_{\sigma \in S_n}(-1)^{sgn(\sigma)}\Bigg(\prod_{i=1}^n (x_i-j_it)^{n-\sigma(i)}\Bigg)\Bigg].
\end{eqnarray*}
Hence the degree $n+1$ (with respect to $x_0$) of the expression inside the parenthesis in \eqref{hom} is 0, so it has no monomials of degree $n+1$ in $x_0$. We have established that each monomial in the monomial expansion of $G(\vec{X},t)$ has total degree $\frac{n(n+1)}{2}$ in the variables $(x_0,\ldots, x_n)$.

\subsection{}

Since $G(\vec{X},t)$ is antisymmetric, homogenous of degree $\frac{n(n+1)}{2}$, with respect to $(x_0,\ldots,x_n)$, it is divisible by $V(x_0,\ldots,x_n)$, and the quotient has degree $0$ in $(x_0,\ldots,x_n)$. That is, we may write:
\begin{eqnarray*}
G(\vec{X},t)=H(X,t)\times V(x_0,\ldots,x_n),
\end{eqnarray*}
for a function $H$, that only depends on the two variables, $X$ and $t$. In this section, we compute $H(X,t)$.

For $\vec{J} \in \{0,1\}^{n+1}$, $\sigma \in S_{n+1}$, write:
\begin{eqnarray*}
\Pi (\vec{J}) &=& \prod_{i=0}^n (X-x_i)^{1-j_i}x_i^{j_i}\\
V_{\sigma}(\vec{J})&=&(-1)^{sgn(\sigma)}(x_0-j_0t_0)^{n-\sigma(0)}\cdots(x_n-j_nt_n)^{n-\sigma(n)},
\end{eqnarray*}
so that we have:
\begin{eqnarray*}
G(\vec{X},t)= \sum_{\vec{J} \in \{0,1\}^{n+1}} \Bigg[\sum_{\sigma \in S_{n+1}} \Pi (\vec{J}) V_{\sigma}(\vec{J})\Bigg].
\end{eqnarray*}
We make the following definitions:
\begin{enumerate}
\item{Let $p \in \mathbb{Z}[X,t,x_0,\ldots,x_n]$.  Consider $p$ as a polynomial in $x_0,\ldots,x_n$ with coefficients in $\mathbb{Z}[X,t]$. Define $\delta(p) \in \mathbb{Z}[X,t]$ to be the coefficient  of $x_0^n\cdots x_n^0$ in $p$.}
\item{Let $p_i \in \mathbb{Z}[X,t,x_i]$.  Consider $p_i$ as a polynomial in $x_i$ with coefficients in $\mathbb{Z}[X,t]$. Define $\delta_i(p_i) \in \mathbb{Z}[X,t]$ to be the coefficient  of $x_i^{n-i}$ in $p_i$.}
\item{Define $\delta_{ij}=1$ if $i=j$, and $\delta_{ij}=0$ if $i\neq j$.  (Kronecker delta function.)}
\end{enumerate}  
Using these definitions, we may write:
\begin{eqnarray*}
H(X,t)=\delta(G(\vec{X},t))= \sum_{\vec{J} \in \{0,1\}^{n+1}} \Bigg[\sum_{\sigma \in S_{n+1}} \delta \bigg(\Pi(\vec{J})V_{\sigma}(\vec{J})\bigg)\Bigg].
\end{eqnarray*}

Since $\Pi(\vec{J})$ is linear in each $x_i$, it follows that $\delta \big(\Pi(\vec{J})V_{\sigma}(\vec{J})\big)=0$ unless $\sigma(i) \in \{i,i+1\}$ for each $i$. The only such permutation is the identity, so,
\begin{eqnarray*}
H(X,t)&=&\sum_{\vec{J} \in \{0,1\}^{n+1}} \delta \bigg(\Pi(\vec{J})(x_0-j_0t_0)^n\cdots(x_n-j_nt_n)^0\bigg)\\
&=&\sum_{\vec{J} \in \{0,1\}^{n+1}} \delta \bigg(\prod_{i=0}^n \Big[(X-x_i)^{1-j_i}x_i^{j_i}(x_i-j_i t)^{n-i}\Big]\bigg)\\
&=&\sum_{\vec{J} \in \{0,1\}^{n+1}} \Bigg[\prod_{i=0}^n \delta_i \Big((X-x_i)^{1-j_i}x_i^{j_i}(x_i-j_i t)^{n-i}\Big)\Bigg]\\
&=&\sum_{\vec{J} \in \{0,1\}^{n+1}}\Bigg[ \prod_{i=0}^n X^{(\delta_{0j_i})}\big((i-n)t\big)^{(\delta_{1j_i})}\Bigg].
\end{eqnarray*}
From this expression, we see that each monomial in the monomial expansion of $H(X,t)$ has total degree $n+1$ in the variables $X,t$. Thus we may write:
\begin{align*}
H(X,t)=\sum_{m=0}^{n+1} a_m X^{(n+1-m)}t^m, \text{\,\,\,\,\,\,\,\,where,\,\,\,\,\,\,\,\,} a_m&=&\sum_{\vec{J}:\sum (j_i)=m} \Bigg[ \prod_{j_i=1} (i-n)\Bigg]\\
&=&\sum_{I \in {\{0,\ldots,n\} \choose m}} \Bigg[ \prod_{i\in I} (-i)\Bigg],
\end{align*}
and where $ {\{0,\ldots,n\} \choose m}$ is the set of all $m$ element subsets of $\{0,\ldots,n\}$. 
Moreover, if we write out the following expansion:
\begin{align*}
\prod_{r=0}^n (X-rt)=\sum_{m=0}^{n+1} b_m X^{(n+1-m)}t^m,  \text{\,\,\,\,\,\,\,we find:\,\,\,\,\,\,\,} b_m&=&\sum_{I \in {\{0,\ldots,n\} \choose m}} \Bigg[ \prod_{i\in I} (-i)\Bigg],\\
&\implies& H(X,t)=\prod_{r=0}^n (X-rt),
\end{align*}
and the lemma is complete.
\end{proof}

\section{Conclusion of Theorem \ref{main}}
We apply \eqref{alg} with $n=d$, $X=N+d$, $x_i=m_i$, and $t=1$, noting that $([N+d]-m_i)$ may be replaced by $(N+i-n_i)$ by the definition of $m_i$.  This gives:
\begin{eqnarray*}
&&\frac{(N+d)!}{(N-1)!} V(m_0,\ldots,m_d)\\
&=&\sum_{(j_0,\ldots,j_d)\in \{0,1\}^{d+1}} \Bigg(\Bigg[\prod_{i=0}^d(N+i-n_i)^{1-j_i}m_i^{j_i} \Bigg]V(m_0-j_0,\ldots, m_d-j_d)\Bigg).
\end{eqnarray*}
Multiplying both sides by:
\begin{align*}
\Bigg[\prod_{i=0}^d\frac{(N+i-1)!}{(N+i-n_i)!}\Bigg]\frac{1}{m_0!\cdots m_d!},
\end{align*}
we have:
\begin{eqnarray*}
&&\Bigg[\prod_{i=0}^d\frac{(N+i)!}{(N+i-n_i)!}\Bigg]\frac{V(m_0,\ldots,m_d)}{m_0!\cdots m_d!}\\
&=&\sum_{(j_0,\ldots,j_d)\in \{0,1\}^{d+1}} \Bigg(\Bigg[\prod_{i=0}^d\frac{(N+i-1)!}{(N+i-n_i)!}(N+i-n_i)^{1-j_i}m_i^{j_i} \Bigg]\frac{V(m_0-j_0,\ldots,m_d-j_d)}{m_0!\cdots m_d!}\Bigg)\\
&=&\sum_{(j_0,\ldots,j_d)\in \{0,1\}^{d+1}} \Bigg(\Bigg[\prod_{i=0}^d\frac{(N+i-1)!}{(N+i-n_i-1+j_i)!}\Bigg]\frac{V(m_0-j_0,\ldots,m_d-j_d)}{(m_0-j_0)!\cdots (m_d-j_d)!}\Bigg).\\
\end{eqnarray*}
Equating the first and third lines above gives:
\begin{eqnarray*}
F(N,n_0,\ldots,n_d)=\sum_{(j_0,\ldots,j_d)\in \{0,1\}^{d+1}} F(N-1,n_0-j_1,\ldots,n_d-j_d).
\end{eqnarray*}
This establishes Theorem \ref{main}.

\section{The Hook Content Formula}
\textbf{In accordance with our indexing for the $n_i$, we will also index the rows and columns of a Young diagram beginning on $0$. We will refer to row $i$, column $i$, candidate $i$, etc., with respect to this indexing (i.e., to refer to the $(i+1)^{st}$ row, column, etc.) throughout the rest of the paper.}

Let $\mu$ be a Young diagram, such that $\mu'=(n_0,\ldots,n_d)$. Then, by definition, we must have $n_0\geq \cdots \geq n_d \geq 1$. As noted earlier, this implies that if $N$ is any nonnegative integer and we let $\vec{N}=(N,n_0,\ldots,n_d)$, then $SSYT(N,\mu)=C(\vec{N})$. Moreover, since $\dagger(\vec{N})=1$, it follows by Theorem \ref{main} that $C(\vec{N})=F(\vec{N})$, whence:
\begin{eqnarray}
SSYT(N,\mu)=\prod_{i=0}^d\frac{(N+i)!}{(N+i-n_i)!}\times \frac{V(m_0,\ldots,m_d)}{m_0!\cdots m_d!}. \label {form}
\end{eqnarray}
(Again, we use the notation $m_i=n_i+d-i$).

On the other hand, the Hook Content Formula states that:
\begin{eqnarray*}
SSYT(N,\mu)=\prod_{x_{ij} \in \mu} \frac {|x_{ij}|}{|h_{ij}|}=\Bigg[\prod_{{x_{ij}}\in \mu}(N+i-j)\Bigg] \Bigg[\prod_{{x_{ij}}\in \mu}\frac{1}{|h_{ij}|}\Bigg],
\end{eqnarray*}
\cite{stan} where $h_{ij}$ refers to the hook through $x_{ij}$, $|h_{ij}|$ the length of this hook, and $|x_{ij}|$ the hook content of $x_{ij}$. It follows from the fact that $\mu'=(n_0,\ldots,n_d)$ that:
\begin{eqnarray*}
\prod_{{x_{ij}}\in \mu}(N+i-j)=\prod_{i=0}^d\frac{(N+i)!}{(N+i-n_i)!},
\end{eqnarray*}
so, the hook content formula will follow from $\eqref{form}$ if we show that:

\begin{eqnarray*}
(*)\prod_{{x_{ij}}\in \mu}|h_{ij}|=\frac{m_0!\cdots m_d!}{V(m_0,\ldots,m_d)}, \text{ or equivalently, } \prod_{{x_{ij}}\in \lambda}|h_{ij}|=\frac{m_0!\cdots m_d!}{V(m_0,\ldots,m_d)},
\end{eqnarray*}
\cite{van} for $\lambda=(n_0,\ldots,n_d)=\mu'$, since the product of hook lengths is invariant under conjugation.

To demonstrate the latter equality, first note that it may be rewritten as:

\begin{equation*}
\prod_{{x_{ij}}\in \lambda}|h_{ij}|=\prod_{i=0}^d\Bigg[ \frac{m_i!} {\prod\limits^{d}_{j=i+1}(m_i-m_j)}\Bigg]. \label{p}
\end{equation*}

To establish the equation above, thereby proving $(*)$, we show that, for each $i$, the product of the hook lengths of the squares in row $i$ of $\lambda$ (denote $\lambda_i$) is given by:
\begin{equation}
\prod_{{x_{ij}}\in \lambda_i}|h_{ij}|=\frac{m_i!}{\prod\limits^{d}_{j=i+1}(m_i-m_j)}. \label{i}
\end{equation}

\large
First, note that, for any $j$ such that $i < j \leq d$, the value of $m_i-m_j$ does not coincide with the hook length of any of the squares in $\lambda_i$. To see this, let $\overline{h_{in_j}}$ be the hook obtained by removing from $h_{in_j}$ all the squares below $x_{j-1n_j}$. The path from $x_{d0}$ to $x_{in_i}$ along $h_{i0}$ includes $m_i$ squares. It follows that the path from $x_{d0}$ to $x_{in_i}$ that begins along $h_{j0}$ and concludes along $\overline{h_{in_j}}$ must also include $m_i$ squares. Moreover, the path from $x_{d0}$ to $x_{jn_j}$ along $h_{j0}$ includes $m_j$ squares. From this it follows that the length of $\overline{h_{in_j}}$ is given by $m_i-m_j$. One easily observes that for $k \leq n_j$, $|h_{ik}|>|\overline{h_{in_j}}|$, and for $k > n_j$, $|h_{ik}|<|\overline{h_{in_j}}|$. Hence, no square in row $i$ of $\lambda$ has hook length equal to $|\overline{h_{in_j}}|=m_i-m_j$.
\normalsize
\begin{center}
\includegraphics[scale=0.45]{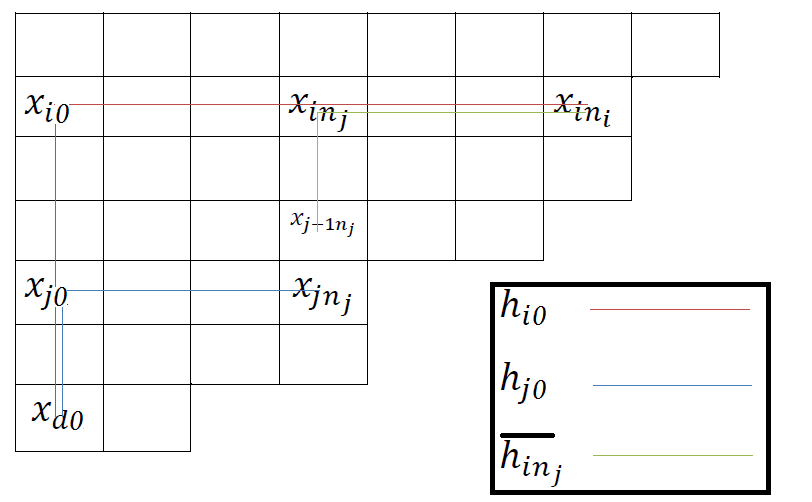}
\end{center}

Let $H_i=\{|h_{ij}|:x_{ij}\in \lambda_i\}$, be the set of hook lengths in row $i$ of $\lambda$ (each hook in a row has a distinct length), let $K_i=\{(m_i-m_j): i<j\leq d\}$, and let $M_i=\{1,\ldots, m_i\}$. Now $H_i\subseteq M_i$, $K_i\subseteq M_i$, and:
\begin{eqnarray*}
\#(H_i)+\#(K_i)=(n_i)+(d-i)=m_i=\#(M_i).
\end{eqnarray*}
Further, by the argument above $H_i$ and $K_i$ are disjoint, so $H_i\dot\cup K_i=M_i$, whence:
\begin{eqnarray*}
\prod_{{x_{ij}}\in \lambda_i}|h_{ij}|=\prod_{h\in H_i}(h)=\frac{\prod_{m\in M_i}(m)}{\prod_{k\in K_i}(k)}=\frac{m_i!}{\prod\limits^{d}_{j=i+1}(m_i-m_j)},
\end{eqnarray*}
and \eqref{i} has been proven. This establishes $(*)$, and The Hook Content Formula now follows.
\section{Relation to The Hook Length Formula}

Suppose in our definition of $C(\vec{N})$ we impose the additional requirement that each vector $v_k$ must contain exactly one entry equal to $1$, and refer to this new count as $C^{\ast}(\vec{N})$. In other words, $C^{\ast}(\vec{N})$ gives the number of ways to add $N$ $d$--dimensional vectors each composed of $(d-1)$ $0$'s and one 1, so that each partial sum is weakly decreasing by coordinate, and so that the final sum is equal to $n_i$ in each coordinate. Of course, unless $N=\sum(n_i)$, and the $n_i$ are weakly decreasing by coordinate, $C^*(\vec{N})=0$. On the other hand, if both these conditions hold, and $n_d \neq 0$, then $C(\vec{N})$ gives the number of standard Young tableaux on a Young diagram of shape $(n_0,\ldots, n_d)'$. In fact, the proof given in this paper can easily be adapted to a proof of the Hook Length Formula.

First, note that $C^*(\vec{N})$ satisfies the exact same recursion as $C(N)$. That is, if $n_0\geq \cdots\geq n_d$, $n_d\neq 0$, and $N\geq 1$, we may write:
\begin{eqnarray*}
C^*(N,n_0,\ldots,n_d)=\sum_{i=0}^n C^*(N-1,n_0,\ldots,n_i-1,\ldots.n_d), 
\end{eqnarray*}
where we have taken advantage of the fact that the only summands on the right hand side which do not vanish are those for which exactly one $j_i=1$.

If we set the degree $n$ (with respect to the variable $X$) part of the expression defining $G(\vec{X},t)$ in Lemma \ref{prelem} equal to the degree $n$ (with respect to $X$) part of the expression found in equation \eqref{alg} we have the following identity:
\begin{align*}
\sum_{i=0}^n \bigg[(X^n)(x_i) V(x_0,\ldots,x_i-t,\ldots, x_n)\bigg]&+&\sum_{i=0}^n \bigg[(X^n)(-x_i) V(x_0,\ldots,x_i,\ldots, x_n)\bigg] \\
&=&\sum_{r=0}^n \bigg[(X^n)(-rt)V(x_0,\ldots,x_n)\bigg]\\
\implies\sum_{i=0}^n \bigg((x_i)V(x_0,\ldots,x_i-t,\ldots, x_n)\bigg)&=&\Bigg[\sum_{i=0}^n (x_i -it) \Bigg]V(x_0,\ldots,x_n)
\end{align*}
\cite[p. 163]{van}.
If we make the substitutions, $n=d$, $x_i=m_i$, and $t=1$, and let $N=\sum(n_i)$, then after multiplying by $(N-1)!/(m_0!\cdots m_d!)$ this becomes:
\begin{eqnarray*}
\sum_{i=0}^d \bigg( (N-1)!\times \frac{ V(x_0,\ldots,m_i-1,\ldots, m_d)}{(m_0!\cdots (m_i-1)! \cdots m_d!)}\bigg)=N! \times \frac{V(m_1,\ldots,m_d)}{m_1!\cdots m_d!}.
\end{eqnarray*}
Define $F^*(N)$ to be the expression on the right hand side of the equation above. We see immediately that $F^*(N)$ satisfies the same recursion as $C^*(\vec{N})$ whenever $n_0\geq \cdots\geq n_d\geq 1$, and $N=\sum(n_i)$.
By the same inductive arguments as in the original proof, this implies that when $(n_0,\ldots,n_d)$ is a partition of $N$, $F^*(\vec{N})$ gives the number of SYT of shape $(n_0,\ldots,n_d)'$, i.e.:
\begin{eqnarray}
C^*(\vec{N})=N!\times \frac{V(m_1,\ldots,m_d)}{m_1!\cdots m_d!} \label{hookl}
\end{eqnarray} 
\cite[p. 164]{van}. Since the right hand factor is the reciprocal of the product of the hook lengths of $\mu=(n_0,\ldots,n_d)'$ by the arguments in the last section, we have proven the Hook Length Formula.

\section{A Combinatorial Result}
For any partition $\mu$ with $\mu'=(n_0,\ldots,n_d)$:

\begin{enumerate}
\item{ Let $T(N,\mu)=SSYT(N,\mu)$, and $T^*(\mu)=SYT(\mu)$.}
\item{Let $\textbf{T}^C(N,\mu)$ be the number of ways to fill a Young diagram of shape $\mu$ with strictly increasing columns of positive integers such that the entries in column $i$ are not greater than $N+i$.}
\item{ Let $\textbf{T}^R(N,\mu)$ be the number of ways to fill a Young diagram of shape $\mu$ with weakly increasing rows of positive integers such that the entries in row $j$ are greater than $j$, but not greater than $N$.}
\item{Let $\textbf{T}^*(\mu)$ be the number of ways to fill a Young diagram of shape $\mu$ with one of each of the numbers $\{1,\ldots,|\mu|\}$, such that the numbers are increasing down each column.}
\item{Define $T_0(\mu)=T(n_0,\mu)$, $\textbf{T}^C_0(\mu)=\textbf{T}^C(n_0,\mu)$, and $\textbf{T}^R_0(\mu)=\textbf{T}^R(n_0,\mu)$. (This is the least nonzero value of $T(N,\mu)$, etc., over all $N\geq 0$.)}
\item{Denote the set that each of these functions is counting with the prefix $S$. E.g., $ST(N,\mu)$ is the set of elements that are counted by $T(N,\mu)$.}
\end{enumerate}
\textbf{Remark.} Each element of $ST(N,\mu)$ appears as an element of both $S\textbf{T}^C(N,\mu)$ and $S\textbf{T}^R(N,\mu)$, in fact: $ST(N,\mu)=S\textbf{T}^C(N,\mu)\cap S\textbf{T}^R(N,\mu)$.
Each element of $ST^*(\mu)$ appears as an element of $S\textbf{T}^*(\mu)$. In fact, the former is equivalent to the set of (single-vote) ballot sequences with result $\mu'=(n_0,\ldots,n_d)$, and the latter to the set of \textit{all} possible sequences of votes by which that result could occur in a single-vote election. \\

Henceforth, assume $N \geq n_0$. We make the following computations:
\begin{align*} 
&\textbf{T}^C(N,\mu)=\prod_{i=0}^d {N+i \choose n_i}& &\textbf{T}_0^R(\mu')=\prod_{i=0}^d {n_i+d-i \choose n_i}& & \textbf{T}^*(\mu)={ |\mu| \choose n_0,\ldots, n_d}.&
\end{align*}
For $\mu=(n_0,\ldots,n_d)'$, define:
\begin{eqnarray*}
P(\mu)=\frac{n_0!\cdots n_d!}{\prod_{x_{ij}\in \mu} |h_{ij}|}=\frac{V(m_0,\ldots,m_d)}{m_0! \cdots m_d!} \times (n_0!\cdots n_d!) =\left[ \prod_{0\leq i<j \leq d} \frac{n_i-n_j+j-i}{n_i+j-i}\right].
\end{eqnarray*}
Note that \eqref{form} may now be rewritten as:
\begin{eqnarray*}
T(N,\mu)=\prod_{i=0}^d {N+i \choose n_i}P(\mu)=\textbf{T}^C(N,\mu)P(\mu),
\end{eqnarray*}
and further, that it follows from \eqref{hookl} that:
\begin{eqnarray*}
T^*(\mu)={|\mu| \choose n_0, \ldots, n_d}P(\mu)=\textbf{T}^*(\mu)P(\mu).
\end{eqnarray*}

We now rewrite $P(\mu)$ as the product of two expressions as follows: 
\begin{eqnarray*}
P(\mu)=\left[ \prod_{0\leq i<j \leq d} \left(\frac{n_i-n_j+j-i}{j-i}\right)\right] \left[\prod_{0\leq i<j \leq d} \left( \frac{j-i}{n_i+j-i}\right) \right].
\end{eqnarray*}
We let $\mathcal{W}(\mu)$ refer to the left hand factor,\footnote{In the next few lines we prove that $\mathcal{W}(\mu)=T(d+1,\mu')=T_0(\mu')$. Alternatively, one may recall that the Schur polynomial $s_{\mu'}(x_0,\ldots,x_d)$ evaluated at $(1,\ldots,1)$ is exactly $T_0(\mu')$. The result then follows after calculating that $s_{\mu'}(1,\ldots,1)=\mathcal{W}(\mu)$ \cite[p. 66]{rep}. The expression for $\mathcal{W}(\mu)$ often appears in representation theory as a form of Weyl's dimension formula \cite{weyl}.} and $\mathcal{R}(\mu)$ to the reciprocal of the right hand factor, so that we may write $P(\mu)=\frac{\mathcal{W}(\mu)}{ \mathcal{R}(\mu)}$. Moreover, we may rewrite:
\begin{eqnarray*}
\mathcal{W}(\mu)= \left(\frac{m_0! \cdots m_d!}{\prod \limits_{i=0}^d (d-i)!}\right)\Bigg[\frac{V(m_0,\ldots,m_d)}{m_0! \cdots m_d!}\Bigg]=\Bigg[\prod_{i=0}^d\frac{(n_i+d-i)!}{(d-i)!}\Bigg] \Bigg[\frac{V(m_0,\ldots,m_d)}{m_0!\cdots m_d!}\Bigg].
\end{eqnarray*}

Let's examine the expression on the right. Its right hand factor is the familiar reciprocal of the product of the hook lengths of $\mu$, which, by symmetry, is also the reciprocal of the product of the hook lengths of $\mu'$. Moreover, its left hand factor is seen to be the product of the hook contents of a Young diagram with parameters $(d+1,\mu')$. Thus $\mathcal{W}(\mu)=T(d+1,\mu')=T_0(\mu')$.
Furthermore, note that:
\begin{eqnarray*}
\mathcal{R}(\mu)=\left[\prod_{0\leq i<j \leq d} \left( \frac{n_i+j-i}{j-i}\right) \right]=\prod_{i=0}^d \Bigg[\frac{(n_i+d-i)!}{(d-i)!}\Bigg]={n_i+d-i \choose n_i}=\textbf{T}_0^R(\mu').
\end{eqnarray*}
Putting this all together, we have:
\begin{eqnarray*}
T_0(\mu')=\textbf{T}_0^R(\mu')P(\mu).
\end{eqnarray*}

Combining this with the earlier results, we have:
\begin{align*}
\frac{T(N,\mu)}{\textbf{T}^C(N,\mu)}=\frac{T_0(\mu')}{\textbf{T}_0^R(\mu')}=\frac{T^*(\mu)}{\textbf{T}^*(\mu)}=P(\mu).
\end{align*}
That is, for any partition $\mu=(n_0,\ldots, n_d)$ and any integer $N \geq n_0$ we have:
\begin{thm}
If $a \in S\textbf{T}^C(N,\mu)$, $b \in S\textbf{T}_0^R(\mu')$, and $c \in S\textbf{T}^*(\mu)$, the following three probabilities are the same:
\begin{align*}
&Pr(a \in ST(N,\mu))& &=& &Pr(b \in ST_0(\mu'))& &=& &Pr(c \in ST^*(\mu)).&
\end{align*}
And they each coincide with the expression:
\begin{eqnarray*}
P(\mu) =\frac{n_0!\cdots n_d!}{\prod_{x_{ij}\in \mu} |h_{ij}|}.
\end{eqnarray*}
\end{thm}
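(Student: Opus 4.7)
The plan is to observe that each of the three probabilities is a ratio of cardinalities and then to apply the three factorization identities
\[
T(N,\mu) = \textbf{T}^C(N,\mu)\, P(\mu), \qquad T_0(\mu') = \textbf{T}_0^R(\mu')\, P(\mu), \qquad T^*(\mu) = \textbf{T}^*(\mu)\, P(\mu),
\]
all of which were established in the paragraphs immediately preceding the theorem. Since those identities are already in hand, the theorem reduces to a short definition chase.

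First I would verify the three set inclusions needed in order for the probabilities to make sense as ratios: namely $ST(N,\mu) \subseteq S\textbf{T}^C(N,\mu)$, $ST_0(\mu') \subseteq S\textbf{T}_0^R(\mu')$, and $ST^*(\mu) \subseteq S\textbf{T}^*(\mu)$. The first and third are immediate from the definitions, since SSYTs and SYTs both have strictly increasing columns, and in the SSYT case every entry is at most $N \leq N+i$ in column $i$. For the second, I would first invoke the footnote identification $T_0(\mu') = T(d+1,\mu')$; then an SSYT on $\mu'$ with entries drawn from $\{1,\ldots,d+1\}$ has weakly increasing rows, entries bounded by $d+1$, and strict column-increase forces every row-$j$ entry to be at least $j+1$, which is exactly what the $\textbf{T}^R$ condition with $N=d+1$ demands.

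With the inclusions in hand, a uniformly chosen $a \in S\textbf{T}^C(N,\mu)$ satisfies $Pr(a \in ST(N,\mu)) = T(N,\mu)/\textbf{T}^C(N,\mu)$, and the same cardinality-ratio argument applies to $b$ and $c$. The three factorization identities then immediately identify each of these ratios with $P(\mu)$, establishing the triple equality. The final closed form $P(\mu) = n_0!\cdots n_d!/\prod_{x_{ij}\in\mu}|h_{ij}|$ is just the defining expression of $P(\mu)$ at the start of Section 8, which is itself a direct restatement of the identity $(*)$ proved in Section 6. No serious obstacle remains; the only mildly delicate step is the middle inclusion, where one must unravel the notation $T_0(\mu')$ (as opposed to $T_0(\mu)$) to see that the relevant SSYTs have entries bounded by $\mu_0 = d+1$, exactly matching the $\textbf{T}^R$ upper bound.
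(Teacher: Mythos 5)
Your proposal is correct and follows the paper's own route: the paper likewise derives the three factorizations $T(N,\mu)=\textbf{T}^C(N,\mu)P(\mu)$, $T_0(\mu')=\textbf{T}_0^R(\mu')P(\mu)$, $T^*(\mu)=\textbf{T}^*(\mu)P(\mu)$ in the paragraphs before the theorem and reads the result off as ratios of cardinalities, with the needed inclusions recorded in the preceding Remark. Your explicit verification of the middle inclusion via $T_0(\mu')=T(d+1,\mu')$ is a welcome bit of added care but not a different argument.
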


\end{document}